\theoremstyle{definition}
\newtheorem{defn}{\protect\definitionname}
\theoremstyle{remark}
\newtheorem{rem}{\protect\remarkname}
\theoremstyle{plain}
\newtheorem{thm}{\protect\theoremname}
\providecommand{\definitionname}{Definition}
\providecommand{\remarkname}{Remark}
\providecommand{\theoremname}{Theorem}
\begin{document}
\title{A note on the semiclassical measure at singular points of the boundary
of the Bunimovich stadium}
\author{Dan Mangoubi and Adi Weller Weiser}
\maketitle
\begin{abstract}
An argument by Hassell proving the existence of a Bunimovich stadium
for which there are semiclassical measures giving positive mass to
the submanifold of bouncing ball trajectories uses a notion of non-gliding
points. However, this notion is defined only for domains with~$C^{2}$-boundaries.
The purpose of this note is to clarify the argument.
\end{abstract}

\section{Introduction}

In a celebrated paper \cite{hassell2010ergodic} Hassell proves the
existence of quantum ergodic manifolds which are not quantum uniquely
ergodic. Furthermore, Hassell shows that there is a Bunimovich stadium,
$M$, with a semiclassical measure giving positive mass to the submanifold
of bouncing ball trajectories. For this refinement, in order to rule
out formations of certain non-uniform semiclassical measures, Hassell
applies a theorem of Burq and Gérard \cite{burq-gerard} showing that
a boundary semiclassical measure cannot put any mass on the set of
non-gliding points, a special subset of $S^{*}\partial M$ where the
classical trajectories passing through are not affected by the boundary
(see Appendix). However, the definition of non-gliding points requires
at least~$C^{2}$-smoothness of the boundary, while the boundary
of the Bunimovich stadium is only~$C^{1,1}$-smooth. Moreover, at
a neighborhood of a singular point~$p$ of the boundary, one has
points in $S^{*}\partial M$ which are far from being non-gliding,
since the curvature jumps there from zero to a positive constant,
and it is not a-priori clear from Burq and Gérard's argument whether
one should regard the points of~$S_{p}^{*}\partial M$ as gliding
or non-gliding. The aim of this note is to clarify Hassell's argument,
by showing (in Theorem \ref{The Theorem}) that indeed the semiclassical
measure on the boundary gives zero mass to the portion of~$S^{*}\partial M$
lying above the \emph{closure} of the straight part of~$\partial M$.
Our result is an instance where one can relax the smoothness assumption
in~\cite{burq-gerard} and hints that in two dimensions the~$C^{1,1}$
assumption may be sufficient to conclude that the semiclassical measure
of the boundary vanishes on the closure of the subset of~$S^{*}\partial M$
where the classical trajectories passing through are not reflected
as a replacement for non-gliding points in the smooth case. In this
paper, we treat the Bunimovich stadium case. The proof, like in \cite{burq-gerard},
uses Gérard-Leichtnam's transport equation from \cite{gerard-leichtnam},
recalled here in Section \ref{sec:Background}, and follows similar
lines of microlocalization on non-gliding points. However, unlike
in \cite{burq-gerard}, we do not make any change of variables, allowing
us to treat the singular points of the boundary as well.

\section*{Acknowledgments}

This paper arose from a discussion with Patrick Gérard. Patrick showed
us an initial ad hoc argument explaining the application of~\cite{burq-gerard}
in~\cite{hassell2010ergodic} (see §\ref{subsec:An-argument-of-Gerard}).
We are very grateful to Nicolas Burq, Patrick Gérard and Stéphane
Nonnenmacher for their encouragement to write this note and for many
hours of discussions on topics of Quantum Chaos. D.\ M.\ especially
thanks Stéphane for the invitation to spend a sabbatical leave in
Paris-Sud XI in a very pleasant research atmosphere, during which
this work started. A.\ W.\ appreciates her welcome in the course
of a visit to Paris-Sud XI. Both authors thank Elon Lindenstrauss
for his guidance. The financial supports of LabEx Mathématiques Hadamard
and the French government are gratefully acknowledged. This paper
is part of A.\ W.'s research towards a Ph.D.\ dissertation, conducted
at the Hebrew University of Jerusalem. This research was partially
supported by ISF Grant No.\ 681/18.

\section{Background\label{sec:Background}: Gérard-Leichtnam's transport equation }

By Egorov's theorem, in the case of a closed manifold $M$ a semiclassical
measure $\mu$ is invariant under the geodesic flow; or equivalently
$\xi\cdot\partial_{x}\mu=0$, where~$\left(x,\xi\right)$ are canonical
coordinates on $T^{*}M$. In the presence of a~$C^{1,1}$-boundary
a semiclassical measure $\mu$ evolves according to Gérard-Leichtnam's
equation. It expresses the distribution $\xi\cdot\partial_{x}\mu$
in terms of a corresponding semiclassical measure $\nu$ on $T^{*}\partial M$,
and Snell's law is manifested in the symmetry of this equation. Here
we explain the notions of a corresponding semiclassical measure on~$T^{*}\partial M$,
hyperbolic point and glancing point and we are then able to present
the equation.

Let $M\subset\mathbb{R}^{2}$ be a bounded domain with $C^{1,1}$-boundary,
and $\{u_{j}\}$ an orthonormal basis of Laplace eigenfunctions of
$M$ with Dirichlet's boundary conditions and with increasing positive
eigenvalues $\{E_{j}\}$ respectively. For the definition below we
consider $\{u_{j}\}$ as a sequence in $L^{2}(\mathbb{R}^{2})$.
\begin{defn}
$M$-Semiclassical measures on $T^{*}\mathbb{R}^{2}$.\label{def:Semiclassical-measures}

Let $\{u_{k_{j}}\}$ be a subsequence such that for all $a\in C_{c}^{\infty}\left(T^{*}\mathbb{R}^{2}\right)$
there exists the limit 
\[
\lim_{j\rightarrow\infty}\left\langle Op_{h_{k_{j}}}\left(a\right)u_{k_{j}},u_{k_{j}}\right\rangle 
\]
where $h_{k_{j}}=E_{k_{j}}^{-1/2}$ and $Op_{h}\left(a\right)u\left(x\right)=\frac{1}{\left(2\pi\right)^{2}}\int a\left(x,h\xi\right)e^{ix\cdot\xi}\hat{u}\left(\xi\right)d\xi$
is the standard quantization. Then we define the distribution $\mu$
by 
\[
\left\langle \mu,a\right\rangle =\lim_{j\rightarrow\infty}\left\langle Op_{h_{k_{j}}}\left(a\right)u_{k_{j}},u_{k_{j}}\right\rangle \:.
\]
We call $\mu$ an \emph{$M$-semiclassical measure} associated to
the subsequence $\{u_{k_{j}}\}$. 
\end{defn}
\begin{rem}
The distribution in the above definition is actually a positive measure
supported on $\overline{T^{*}M}$ (see \cite[pp. 565]{gerard-leichtnam}).
\end{rem}
To define the corresponding boundary semiclassical measures recall
that in \cite{gerard-leichtnam} Gérard and Leichtnam showed that
the standard quantization procedure may be extended to the following
class of symbols defined on any closed~$C^{1}$-manifold $N$
\begin{multline*}
\Sigma_{c}\left(T^{*}N\right)=\left\{ b\in C_{c}\left(T^{*}N\right)\right|\\
\left.b\text{ has continuous vertical derivatives up to order }n+1\right\} 
\end{multline*}
where $n$ is the dimension of $N$, while keeping the resulting operators~$Op_{h}\left(b\right)$
bounded on $L^{2}\left(N\right)$. 

In our case, this extension allows to define below a notion of semiclassical
measure on $T^{*}\partial M$. For the sake of this definition we
also recall that due to our assumption that $\partial M$ is~$C^{1,1}$-smooth,
the normal derivatives on the boundary $\partial_{n}u_{k}$ exist
in $L^{2}\left(\partial M\right)$, and moreover, the sequence $h_{k}\partial_{n}u_{k}$
is bounded in~$L^{2}\left(\partial M\right)$ (see e.g.\ \cite[Lemma 2.1]{gerard-leichtnam}).
\begin{defn}
Corresponding semiclassical measures on $T^{*}\partial M$ for a Dirichlet
problem.\label{def:Semiclassical-measures-on-boundary}

For an $M$-semiclassical measure $\mu$ associated to the subsequence
$\{u_{k_{j}}\}$, assume that for all $b\in\Sigma_{c}\left(T^{*}\partial M\right)$
there exists the limit 
\[
\lim_{j\rightarrow\infty}\left\langle Op_{h_{k_{j}}}\left(b\right)h_{k_{j}}\partial_{n}u_{k_{j}},h_{k_{j}}\partial_{n}u_{k_{j}}\right\rangle 
\]
Then, define a positive measure $\nu$ on $T^{*}\partial M$ by 
\[
\left\langle \nu,b\right\rangle =\lim_{j\rightarrow\infty}\left\langle Op_{h_{k_{j}}}\left(b\right)h_{k_{j}}\partial_{n}u_{k_{j}},h_{k_{j}}\partial_{n}u_{k_{j}}\right\rangle \:.
\]
We call $\nu$ a \emph{boundary semiclassical measure corresponding
to $\mu$}. 
\end{defn}
\begin{rem}
For any given subsequence $\{u_{k_{j}}\}$ there is a subsequence
$\{u_{k_{j_{l}}}\}$ for which the limits in Definitions \ref{def:Semiclassical-measures}
and \ref{def:Semiclassical-measures-on-boundary} exist (see \cite[pp. 565, 576]{gerard-leichtnam}).
\end{rem}
Next, points in $T^{*}\partial M$ are classified according to their
dynamical behaviour \cite{Melrose-1975} (see also \cite{Melrose-gliding}).
The definitions below are only for the case of the billiard dynamics.
\begin{defn}
\label{def:Glancing-points}Glancing points
\[
\mathcal{G}=\left\{ \left(x,\xi\right)\in T^{*}\partial M\:|\:\left|\xi\right|=1\right\} =S^{*}\partial M
\]
\end{defn}
\begin{defn}
Hyperbolic points
\[
\mathcal{H}=\left\{ \left(x,\xi\right)\in T^{*}\partial M\:|\:\left|\xi\right|<1\right\} 
\]
\end{defn}
The Gérard-Leichtnam transport equation is
\begin{thm}[\cite{gerard-leichtnam}]
For $\nu$ a boundary semiclassical measure corresponding to an $M$-semiclassical
measure $\mu$ we have for all $a\in C_{c}^{\infty}\left(T^{*}\mathbb{R}^{2}\right)$
\begin{equation}
-\int_{T^{*}\mathbb{R}^{2}}\xi\cdot\partial_{x}a\,d\mu=\int_{\mathcal{H}\cup\mathcal{G}}\frac{a\left(x\left(\rho\right),\xi^{+}\left(\rho\right)\right)-a\left(x\left(\rho\right),\xi^{-}\left(\rho\right)\right)}{\left\langle \xi^{+}\left(\rho\right)-\xi^{-}\left(\rho\right),n\left(x\left(\rho\right)\right)\right\rangle }d\nu\left(\rho\right)\label{eq:gerard-leichtnam}
\end{equation}
where $n\left(x\left(\rho\right)\right)$ is the inward pointing normal
at $x\left(\rho\right)$, and $\xi^{+}\left(\rho\right)$ is the co-vector
in $S_{x\left(\rho\right)}^{*}\mathbb{R}^{2}$ such that its orthogonal
projection on $T_{x\left(\rho\right)}^{*}\partial M$ is equal to
$\rho$'s co-vector and so that $\left\langle \xi^{+}\left(\rho\right),n\left(x\left(\rho\right)\right)\right\rangle \geq0$.
The co-vector $\xi^{-}\left(\rho\right)$ is similarly defined with
$\left\langle \xi^{-}\left(\rho\right),n\left(x\left(\rho\right)\right)\right\rangle \leq0$.
\end{thm}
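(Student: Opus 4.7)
My plan is to start from the distributional identity
\[
(-h^2\Delta - 1)\tilde u_{k_j} = \pm\, h\cdot (h\,\partial_n u_{k_j})\,\delta_{\partial M},
\]
for the zero-extensions $\tilde u_{k_j}\in H^1(\mathbb{R}^2)$, which follows from integration by parts and the Dirichlet eigenvalue equation (the sign being fixed by the orientation of $n$). This identity is the bridge between the interior and boundary contributions.

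Given a (without loss of generality real) test symbol $a\in C_c^\infty(T^*\mathbb{R}^2)$, I would Weyl-quantize it to the self-adjoint operator $A_h:=\mathrm{Op}_h^w(a)$ and compute $\langle [-h^2\Delta,\, A_h]\,\tilde u_{k_j},\tilde u_{k_j}\rangle$ in two ways. Semiclassical symbolic calculus gives
\[
[-h^2\Delta,\, A_h] = \tfrac{h}{i}\,\mathrm{Op}_h^w(\{|\xi|^2,a\}) + O(h^2) = \tfrac{2h}{i}\,\mathrm{Op}_h^w(\xi\cdot\partial_x a) + O(h^2),
\]
so that after dividing by $h$ and passing to the limit the left-hand side becomes $-2i\int\xi\cdot\partial_x a\,d\mu$. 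On the other hand, substituting $-h^2\Delta\,\tilde u_{k_j}=\tilde u_{k_j}\pm h(h\,\partial_n u_{k_j})\delta_{\partial M}$ and using self-adjointness of both $-h^2\Delta$ (as a distributional operator on $\mathbb{R}^2$) and $A_h$, the same quantity collapses to $\pm 2ih\,\mathrm{Im}\,\langle A_h\tilde u_{k_j},\,(h\,\partial_n u_{k_j})\delta_{\partial M}\rangle$. Equating the two gives
\[
-\int\xi\cdot\partial_x a\,d\mu \;=\; \pm\lim_{j\to\infty}\mathrm{Im}\int_{\partial M}(A_{h_{k_j}}\tilde u_{k_j})\big|_{\partial M}\cdot\overline{h_{k_j}\,\partial_n u_{k_j}}\,dS,
\]
so the whole task reduces to identifying this boundary pairing with the right-hand side of \eqref{eq:gerard-leichtnam}.

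This last identification is where I expect the main technical difficulty. In tangential-normal coordinates $(s,t)$ near $\partial M$ one writes $A_h$ as a tangential $h$-pseudodifferential operator with parameter $t$. At a hyperbolic point $\rho=(x,\xi')\in T^*\partial M$ (so $|\xi'|<1$) the extension $\tilde u_{k_j}$ is microlocally supported on the two covectors
\[
\xi^\pm(\rho)=\xi'\pm\sqrt{1-|\xi'|^2}\,n(x)\in S^*_x\mathbb{R}^2,
\]
whose amplitudes are coupled by the Dirichlet boundary condition. A stationary-phase/Fourier argument then identifies the trace $(A_h\tilde u_{k_j})|_{\partial M}$, modulo lower-order terms, with $\mathrm{Op}_h^{\partial M}(b)\,(h\,\partial_n u_{k_j})$ for a boundary symbol $b(x,\xi')$ obtained from $a$ by integration along the normal-momentum fibre; taking the imaginary part extracts the antisymmetric combination $a(x,\xi^+)-a(x,\xi^-)$, and the Jacobian of the projection onto $T^*\partial M$ produces the denominator $\langle\xi^+-\xi^-,n\rangle=2\sqrt{1-|\xi'|^2}$. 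By definition of $\nu$ this converges to $\int_{\mathcal H}\frac{a(x,\xi^+)-a(x,\xi^-)}{\langle\xi^+-\xi^-,n\rangle}\,d\nu$. The glancing contribution $\mathcal G=S^*\partial M$, where numerator and denominator both vanish, is handled by interpreting the quotient by continuity and by a cutoff argument: one inserts a smooth cutoff vanishing near $\{|\xi'|=1\}$, applies the hyperbolic analysis, and lets the cutoff shrink, using the finiteness of $\nu$ and the uniform boundedness of the integrand up to $\mathcal G$.
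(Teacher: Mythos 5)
This theorem is not proved in the present paper: it is quoted verbatim from G\'erard--Leichtnam as background, so there is no in-paper argument to compare against. Judged as a reconstruction of the original proof, your outline follows the correct and, in fact, the standard strategy: the distributional identity $(-h^2\Delta-1)\tilde u=-h\,(h\partial_n u)\,\delta_{\partial M}$ for the zero-extension, the commutator/Rellich computation $[-h^2\Delta,\mathrm{Op}_h^w(a)]=\frac{2h}{i}\mathrm{Op}_h^w(\xi\cdot\partial_x a)$ (which is even exact for the Weyl quantization since $|\xi|^2$ is quadratic), and the identification of the resulting boundary pairing $\mathrm{Im}\int_{\partial M}(A_h\tilde u)|_{\partial M}\,\overline{h\partial_n u}$ with the right-hand side via the hyperbolic factorization, the residues at $\xi_n=\pm\sqrt{1-|\xi'|^2}$ producing the antisymmetric combination $a(x,\xi^+)-a(x,\xi^-)$ over $\langle\xi^+-\xi^-,n\rangle$. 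This is the architecture of the cited proof.

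However, three steps that you describe rather than prove are precisely where the real work lies, and in the present setting they are not routine. First, you never treat the elliptic region $|\xi'|>1$: the boundary measure $\nu$ is a priori a measure on all of $T^*\partial M$, and one must show (by an elliptic parametrix for $h^2\Delta+1$ near the boundary) that the pairing localized in $\{|\xi'|>1\}$ is $o(1)$, so that only $\mathcal H\cup\mathcal G$ survives on the right-hand side. Second, your ``tangential--normal coordinates plus stationary phase'' step silently assumes a smooth boundary, whereas the whole point of the surrounding paper is that $\partial M$ is only $C^{1,1}$; the normal coordinates are then only Lipschitz at the level of first derivatives, and the tangential symbol calculus must be carried out in the low-regularity class $\Sigma_c$ recalled in Section~\ref{sec:Background} --- this is a substantive part of the cited argument, not a formality. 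Third, the glancing set: saying the quotient ``is interpreted by continuity'' and handled ``by a cutoff argument'' begs the question, because $\nu$ may charge $\mathcal G$ (indeed the main theorem of this paper is about exactly that possibility), so one must prove a bound on the boundary pairing that is uniform up to $|\xi'|=1$ and control the cutoff's contribution on \emph{both} sides of the identity, including the terms coming from differentiating the cutoff in the interior integral. As a proof plan your proposal is sound; as a proof it leaves open the three points that make the theorem nontrivial.
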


\section{Zero mass on the set of glancing points of the straight part of the
boundary}

It is shown in \cite{burq-gerard} that in the case that the boundary
is smooth the set of non-gliding points is of boundary semiclassical
measure zero. We extend the proof to the case of the Bunimovich stadium
and show that the set of glancing points lying above the closure of
the straight part of the boundary is of boundary semiclassical measure
zero. The main new point is the avoidance of change of variables which
allows us to handle non-smooth points of $\partial M$.

Let $M=\left(\left[-a,a\right]\times\left(0,1\right)\right)\cup\left\{ \left(x\pm a\right)^{2}+\left(y-\frac{1}{2}\right)^{2}<\frac{1}{4}\right\} $.
Decompose $\partial M$ as $S\cup C$ where $S$ is the straight part
of the boundary and $C$ is its curved part. To be precise, we let
$S=S_{0}\cup S_{1}$ where $S_{i}=\left[-a,a\right]\times\left\{ i\right\} $
and put $C=\left(\partial M\right)\backslash S$. Let $\pi:T^{*}\partial M\rightarrow\partial M$
and $\pi_{\mathbb{R}^{2}}:T^{*}\mathbb{R}^{2}\rightarrow\mathbb{R}^{2}$
denote the projection maps.
\begin{thm}
\label{The Theorem}Let $\nu$ be a semiclassical measure on~$T^{*}\partial M$
corresponding to an $M$-semiclassical measure. Let $\text{\ensuremath{\mathcal{G}}}$
be the set of glancing points of~$T^{*}\partial M$. Then $\nu\left(\ensuremath{\mathcal{G}}\cap\pi^{-1}\left(S\right)\right)=0$.
\end{thm}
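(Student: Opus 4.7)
My plan is to apply the Gérard--Leichtnam identity~\eqref{eq:gerard-leichtnam} to a family of test functions microlocalized near $\{\xi_\partial=1\}$ above~$S_0$, and then iterate three times by obvious symmetries. The test function I will use is
\[
a(x,y,\xi_1,\xi_2)=\xi_2\,\chi_\delta(x_1)\,\phi(y)\,\psi_\epsilon(\xi_1),
\]
where $\chi_\delta\ge 0$ is a smooth cutoff equal to~$1$ on $[-a,a]$ with support in $[-a-\delta,a+\delta]$, $\phi\in C_c^\infty(\mathbb{R})$ is a fixed nonnegative bump supported in $(-1/2,1/2)$ with $\phi(0)=1$, and $\psi_\epsilon(\xi_1)=\psi_0((\xi_1-1)/\epsilon)$ is a nonnegative bump concentrated near $\xi_1=1$; an auxiliary compactly supported cutoff in~$\xi_2$, suppressed in the notation, is invisible to the identity because $|\xi|\le 1$ on $\operatorname{supp}\mu$ and $|\xi^\pm|\le 1$ on $\operatorname{supp}\nu$. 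I will set $\epsilon=\delta^4$ and let $\delta\to 0$.

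The easy parts first. The left-hand side of~\eqref{eq:gerard-leichtnam} is $-\int(\xi_1\xi_2\chi'_\delta\phi+\xi_2^2\chi_\delta\phi')\psi_\epsilon\,d\mu$; on $\operatorname{supp}\mu\cap\operatorname{supp}\psi_\epsilon$ the identity $|\xi|=1$ combined with $\xi_1\ge 1-\epsilon$ forces $|\xi_2|\le\sqrt{2\epsilon}$, and combined with $\|\chi'_\delta\|_\infty=O(1/\delta)$ this gives an $O(\sqrt{\epsilon}/\delta+\epsilon)=O(\delta)$ bound. On~$S_0$, where $n=(0,1)$ and $\xi^\pm=(\xi_\partial,\pm\eta)$ with $\eta=\sqrt{1-\xi_\partial^2}$, the right-hand side integrand collapses to $\chi_\delta(x_1)\psi_\epsilon(\xi_\partial)$, whose integral tends by dominated convergence to $\nu(\{\xi_\partial=1\}\cap\pi^{-1}(S_0))$. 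The~$S_1$ contribution vanishes identically since $\phi(1)=0$.

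The main obstacle --- and the point at which the argument must depart from~\cite{burq-gerard} --- is the contribution from the curved part~$C$ near the singular points~$(\pm a,0)$. Parametrizing the right semicircle near~$(a,0)$ by $\theta=-\pi/2+s$ with $s>0$ and writing $(\xi_\partial,\eta)=(\cos\theta_\xi,\sin\theta_\xi)$ with $\theta_\xi\in[0,\pi]$, one has $\xi^\pm_1=\cos(\theta_\xi\mp s)$ and the right-hand side integrand equals
\[
I(s,\theta_\xi)=\tfrac{\cos s}{2}\bigl[\psi_\epsilon(\xi^+_1)+\psi_\epsilon(\xi^-_1)\bigr]+\tfrac{\xi_\partial\sin s}{2\eta}\bigl[\psi_\epsilon(\xi^+_1)-\psi_\epsilon(\xi^-_1)\bigr].
\]
I will split $\operatorname{supp}I$ at $s=3\sqrt{2\epsilon}$: below the threshold I will use the derivative bound $|\psi_\epsilon(\xi^+_1)-\psi_\epsilon(\xi^-_1)|\le 2\eta|\sin s|\,\|\psi'_\epsilon\|_\infty$ (so that the suspect factor becomes $\sin^2 s/\epsilon\le 18$), and above it I will use $\theta_\xi\ge 2s/3$ (so that $\sin s/\eta=O(1)$); both estimates together yield the uniform bound $|I|=O(1)$ on~$\operatorname{supp}I$. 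When further restricted by $\chi_\delta\phi\ne 0$, the support lies in $\{0<s\le C\delta,\ 0\le\theta_\xi\le C\delta+\sqrt{2\epsilon}\}$, and for $\epsilon=\delta^4$ this is a decreasing family whose intersection inside the open arc $C=\partial M\setminus S$ is empty, since the only candidate limit point $s=\theta_\xi=0$ corresponds to the singular point $(a,0)\notin C$. Continuity of the finite measure~$\nu|_{\pi^{-1}(C)}$ from above then drives the $C$-contribution near~$(a,0)$ to zero; the contribution near~$(-a,0)$ is symmetric, and those near~$(\pm a,1)$ vanish identically since $\phi\equiv 0$ on a neighborhood of~$1$. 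Collecting all terms and letting $\delta\to 0$, I obtain $\nu(\{\xi_\partial=1\}\cap\pi^{-1}(S_0))=0$, and three analogous applications of the same argument --- with $\psi_\epsilon$ recentered at $\xi_1=-1$, and with $\phi$ rearranged so that $\phi(0)=0$ and $\phi(1)=1$ --- yield the theorem.
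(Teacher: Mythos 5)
Your argument is correct, but it localizes in a genuinely different set of variables from the paper, and this changes where the work lies. The paper tests \eqref{eq:gerard-leichtnam} against $a_{\varepsilon}(x,\xi)=\xi_2\,b\left(x_1,\tfrac{x_2}{\varepsilon},\tfrac{\xi_2^2}{\varepsilon}\right)\varphi(\xi_1)$, i.e.\ it concentrates in the \emph{normal position and normal momentum} at scale $\varepsilon$ while leaving the tangential momentum essentially free. This has two payoffs: on $\pi^{-1}(C\cup S_1)$ the integrand vanishes identically for small $\varepsilon$ at each fixed point, simply because $x_2(\rho)\neq 0$ there and $b$ has compact support, so the corners require no analysis at all; and both glancing directions $\xi_{\partial}=\pm1$ over $S_0$ are captured in one pass, since the localization $\xi_2^2\le C\varepsilon$ is sign-symmetric. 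The uniform bound needed for dominated convergence then comes from a single mean-value identity exploiting the cancellation between the prefactor $\xi_2$ and the argument $\xi_2^2/\varepsilon$. You instead concentrate in the \emph{tangential momentum} near $\xi_1=1$ at scale $\epsilon=\delta^4$ with a fixed normal cutoff $\phi$; the flat-part computation collapses just as cleanly (both symbols have the form $\xi_2$ times an even function of $\xi_2$, which is what makes the quotient over $S_0$ exact), but the curved part near $(\pm a,0)$ no longer dies pointwise and you must defeat it by hand: the two-regime estimate at $s\lessgtr 3\sqrt{2\epsilon}$ to control the a priori singular factor $\sin s/\eta$ uniformly, plus the observation that the support shrinks onto the corner, which lies in $S_0$ rather than in the open arc $C$, so that continuity from above of the finite measure $\nu$ kills the contribution. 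You also need four applications (two signs of $\xi_{\partial}$, two flat segments) where the paper needs one per segment. Both proofs share the essential novelty of working in ambient coordinates with no boundary-flattening change of variables, which is exactly what permits treating the corner; your version makes the corner contribution visible and estimates it away, while the paper's choice of symbol makes it invisible by design. If you write this up, state explicitly that $\psi_0(0)>0$ and that $\psi_0$ is supported in $[-1,1]$ (so $\xi_1\ge 1-\epsilon$ on its support), note that the precise constant in your claim $\theta_{\xi}\ge 2s/3$ depends on that support (one really gets $\theta_{\xi}\ge s-(1+o(1))\sqrt{2\epsilon}$, which suffices), and record that at glancing points of $C$ the integrand is the continuous extension of the $0/0$ quotient, to which your derivative bound still applies.
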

\begin{proof}
Let $\mu$ be an~$M$-semiclassical measure to which $\nu$ corresponds.
Fix a point~$\rho_{0}\in\mathcal{G}\cap\pi^{-1}\left(S_{0}\right)$
and consider 
\[
a_{\varepsilon}\left(x,\xi\right)=\xi_{2}b\left(x_{1},\frac{x_{2}}{\varepsilon},\frac{\xi_{2}^{2}}{\varepsilon}\right)\varphi\left(\xi_{1}\right)
\]
where $b\in C_{c}^{\infty}\left(\mathbb{R}^{3}\right)$ is nonnegative
with $b\left(x_{1}\left(\rho_{0}\right),0,0\right)>0$ and where the
cutoff function~$\varphi$ is in~$C_{c}^{\infty}\left(\mathbb{R}\right)$
with $\left|\varphi\right|\leq1$ and $\varphi\left(\xi_{1}\right)=1$
for $\left|\xi_{1}\right|\leq1$. In equation~(\ref{eq:gerard-leichtnam})
we make the substitution $a=a_{\varepsilon}$ and take the limit as
$\varepsilon$ tends to zero.

For calculating the left hand side of (\ref{eq:gerard-leichtnam})
we note that
\[
\xi\cdot\partial_{x}a_{\varepsilon}=\xi_{1}\xi_{2}\varphi\left(\xi_{1}\right)\left(\partial_{1}b\right)\left(x_{1},\frac{x_{2}}{\varepsilon},\frac{\xi_{2}^{2}}{\varepsilon}\right)+\xi_{2}^{2}\varphi\left(\xi_{1}\right)\frac{1}{\varepsilon}\left(\partial_{2}b\right)\left(x_{1},\frac{x_{2}}{\varepsilon},\frac{\xi_{2}^{2}}{\varepsilon}\right)
\]
We claim that $\xi\cdot\partial_{x}a_{\varepsilon}\longrightarrow0$
as $\varepsilon\rightarrow0$. Indeed, if $\xi_{2}=0$ then $\xi\cdot\partial_{x}a_{\varepsilon}=0$.
If~$\xi_{2}\neq0$, then for all $\varepsilon$ small enough we have
that $\partial_{i}b\left(x_{1},\frac{x_{2}}{\varepsilon},\frac{\xi_{2}^{2}}{\varepsilon}\right)\equiv0$
since~$b$ is compactly supported. Furthermore, $\xi\cdot\partial_{x}a_{\varepsilon}$
is dominated by a constant independent of $\varepsilon\leq1$. In
fact, we have
\begin{multline*}
\left|\xi_{1}\xi_{2}\varphi\left(\xi_{1}\right)\left(\partial_{1}b\right)\left(x_{1},\frac{x_{2}}{\varepsilon},\frac{\xi_{2}^{2}}{\varepsilon}\right)\right|=\left|\xi_{1}\varphi\left(\xi_{1}\right)\frac{\xi_{2}}{\sqrt{\varepsilon}}\left(\partial_{1}b\right)\left(x_{1},\frac{x_{2}}{\varepsilon},\frac{\xi_{2}^{2}}{\varepsilon}\right)\sqrt{\varepsilon}\right|\\
\leq\sup_{x'_{1},x'_{2},y,\xi'_{1}}\left|y\partial_{1}b\left(x'_{1},x'_{2},y^{2}\right)\right|\left|\xi'_{1}\varphi\left(\xi'_{1}\right)\right|
\end{multline*}
while 
\[
\left|\frac{\xi_{2}^{2}}{\varepsilon}\varphi\left(\xi_{1}\right)\left(\partial_{2}b\right)\left(x_{1},\frac{x_{2}}{\varepsilon},\frac{\xi_{2}^{2}}{\varepsilon}\right)\right|\leq\sup_{x'_{1},x'_{2},y}\left|y\partial_{2}b\left(x'_{1},x'_{2},y\right)\right|\;.
\]
Since $\mu$ has compact support ($\text{supp}\mu\subseteq S^{*}\mathbb{R}^{2}\cap\pi_{\mathbb{R}^{2}}^{-1}\left(\overline{M}\right)$,
see \cite{gerard-leichtnam}) we may conclude by the Lebesgue Dominated
Convergence Theorem that
\begin{equation}
\int_{T^{*}\mathbb{R}^{2}}\xi\cdot\partial_{x}a_{\varepsilon}\,d\mu\underset{\varepsilon\rightarrow0}{\longrightarrow}0\;.\label{eq:LHS}
\end{equation}

Next, we calculate the right hand side of (\ref{eq:gerard-leichtnam}).
We set 
\[
b_{\varepsilon}\left(x_{1},x_{2},\xi_{2}\right)\coloneqq b\left(x_{1},\frac{x_{2}}{\varepsilon},\frac{\xi_{2}^{2}}{\varepsilon}\right)\;.
\]
For $y\in\partial M$, let $n^{*}\left(y\right)\in S_{y}^{*}\mathbb{R}^{2}$
be the co-vector such that $\left\langle n^{*}\left(y\right),n\left(y\right)\right\rangle =1$
and $\left\langle n^{*}\left(y\right),\dot{\gamma}\right\rangle =0$
where $\dot{\gamma}\in S_{y}\partial M$. Note that for $\rho\in\mathcal{H}\cup\mathcal{G}$
\[
\xi^{\pm}\left(\rho\right)=\xi\left(\rho\right)\pm\sqrt{1-\left|\xi\left(\rho\right)\right|^{2}}n^{*}\left(x\left(\rho\right)\right)\;.
\]
It will be convenient to write 
\[
\xi^{\pm}\left(\rho\right)=\xi_{1}^{\pm}\left(\rho\right)dx^{1}+\xi_{2}^{\pm}\left(\rho\right)dx^{2}\:.
\]
The numerator of the integrand in the right hand side of (\ref{eq:gerard-leichtnam})
is
\begin{multline*}
a_{\varepsilon}\left(x\left(\rho\right),\xi^{+}\left(\rho\right)\right)-a_{\varepsilon}\left(x\left(\rho\right),\xi^{-}\left(\rho\right)\right)\\
\overset{\left|\xi_{1}^{\pm}\right|\leq1}{=}\xi_{2}^{+}\left(\rho\right)b_{\varepsilon}\left(x\left(\rho\right),\xi_{2}^{+}\left(\rho\right)\right)-\xi_{2}^{-}\left(\rho\right)b_{\varepsilon}\left(x\left(\rho\right),\xi_{2}^{-}\left(\rho\right)\right)\\
=f_{\varepsilon}\left(x\left(\rho\right),\xi_{2}^{+}\left(\rho\right)\right)-f_{\varepsilon}\left(x\left(\rho\right),\xi_{2}^{-}\left(\rho\right)\right)
\end{multline*}
where $f_{\varepsilon}\left(x,\xi_{2}\right):=\xi_{2}b_{\varepsilon}\left(x,\xi_{2}\right)$.
Hence we can write the integrand in the right hand side of (\ref{eq:gerard-leichtnam})
as
\begin{multline}
A_{\varepsilon}\left(\rho\right)=\frac{f_{\varepsilon}\left(x\left(\rho\right),\xi_{2}^{+}\left(\rho\right)\right)-f_{\varepsilon}\left(x\left(\rho\right),\xi_{2}^{-}\left(\rho\right)\right)}{\|\xi^{+}\left(\rho\right)-\xi^{-}\left(\rho\right)\|}\\
=\frac{\xi_{2}^{+}\left(\rho\right)-\xi_{2}^{-}\left(\rho\right)}{\|\xi^{+}\left(\rho\right)-\xi^{-}\left(\rho\right)\|}\cdot\frac{1}{\xi_{2}^{+}\left(\rho\right)-\xi_{2}^{-}\left(\rho\right)}\int_{\xi_{2}^{-}\left(\rho\right)}^{\xi_{2}^{+}\left(\rho\right)}\partial_{\xi_{2}}f_{\varepsilon}\left(x\left(\rho\right),y\right)dy\\
=\frac{\xi_{2}^{+}\left(\rho\right)-\xi_{2}^{-}\left(\rho\right)}{\|\xi^{+}\left(\rho\right)-\xi^{-}\left(\rho\right)\|}\int_{0}^{1}\partial_{\xi_{2}}f_{\varepsilon}\left(x\left(\rho\right),t\xi_{2}^{+}\left(\rho\right)+\left(1-t\right)\xi_{2}^{-}\left(\rho\right)\right)dt\:.\label{eq:Aepsilon-as-integral}
\end{multline}
To take the limit notice first that
\begin{multline}
\left|\partial_{\xi_{2}}f_{\varepsilon}\left(x_{1},x_{2},\xi_{2}\right)\right|=\left|b_{\varepsilon}\left(x_{1},x_{2},\xi_{2}\right)+2\frac{\xi_{2}^{2}}{\varepsilon}\left(\partial_{3}b\right)\left(x_{1},\frac{x_{2}}{\varepsilon},\frac{\xi_{2}^{2}}{\varepsilon}\right)\right|\\
\le\sup_{y}\left|b\left(x_{1},\frac{x_{2}}{\varepsilon},y\right)\right|+2\sup_{y}\left|y\partial_{3}b\left(x_{1},\frac{x_{2}}{\varepsilon},y\right)\right|\label{eq:Aepsilon-bound}
\end{multline}
and 
\begin{equation}
\frac{\left|\xi_{2}^{+}\left(\rho\right)-\xi_{2}^{-}\left(\rho\right)\right|}{\|\xi^{+}\left(\rho\right)-\xi^{-}\left(\rho\right)\|}\leq1\;.\label{eq:Aepsilon-bound-2}
\end{equation}
As $x_{2}\left(\rho\right)\neq0$ when $\rho\in\pi^{-1}\left(C\cup S_{1}\right)$
and $b$ is with compact support we can infer that $\lim_{\varepsilon\rightarrow0}A_{\varepsilon}\left(\rho\right)=0$
in this case. For $\rho\in\pi^{-1}\left(S_{0}\right)$ notice that
we have $\xi_{2}^{+}\left(\rho\right)=-\xi_{2}^{-}\left(\rho\right)=\sqrt{1-\left|\xi\left(\rho\right)\right|^{2}}$,
and since $b_{\varepsilon}\left(x,\xi_{2}\right)=b_{\varepsilon}\left(x,-\xi_{2}\right)$
we get
\begin{multline*}
A_{\varepsilon}\left(\rho\right)=\frac{\xi_{2}^{+}\left(\rho\right)b_{\varepsilon}\left(x\left(\rho\right),\xi_{2}^{+}\left(\rho\right)\right)-\xi_{2}^{-}\left(\rho\right)b_{\varepsilon}\left(x\left(\rho\right),\xi_{2}^{-}\left(\rho\right)\right)}{\xi_{2}^{+}\left(\rho\right)-\xi_{2}^{-}\left(\rho\right)}\\
=b_{\varepsilon}\left(x_{1}\left(\rho\right),0,\sqrt{1-\left|\xi\left(\rho\right)\right|^{2}}\right)=b\left(x_{1}\left(\rho\right),0,\frac{1-\left|\xi\left(\rho\right)\right|^{2}}{\varepsilon}\right)\:.
\end{multline*}
We conclude that
\begin{equation}
A_{\varepsilon}\left(\rho\right)\underset{\varepsilon\rightarrow0}{\longrightarrow}\begin{cases}
b\left(x_{1}\left(\rho\right),0,0\right), & \text{if }\pi\left(\rho\right)\in S_{0}\text{ and }\text{\ensuremath{\left|\xi\left(\rho\right)\right|}}=1,\\
0, & \text{otherwise}.
\end{cases}\label{eq:Aepsilon_limit}
\end{equation}
We verify that $A_{\varepsilon}$ is bounded independently of $\varepsilon$.
Indeed, from the integral expression (\ref{eq:Aepsilon-as-integral})
and the bounds (\ref{eq:Aepsilon-bound}) and (\ref{eq:Aepsilon-bound-2})
it follows that
\[
\left|A_{\varepsilon}\left(\rho\right)\right|\leq\sup\left|b\right|+2\sup_{x_{1}',x_{2}',y}\left|y\partial_{3}b\left(x_{1}',x_{2}',y\right)\right|\;.
\]

From (\ref{eq:Aepsilon_limit}) we have by the Lebesgue Dominated
Convergence Theorem the convergence of the integral
\begin{equation}
\int_{\mathcal{H}\cup\mathcal{G}}A_{\varepsilon}\left(\rho\right)d\nu\left(\rho\right)\underset{\varepsilon\rightarrow0}{\longrightarrow}\int_{\mathcal{G}\cap\pi^{-1}\left(S_{0}\right)}b\left(x_{1}\left(\rho\right),0,0\right)d\nu\left(\rho\right)\;.\label{eq:RHS}
\end{equation}

Comparing (\ref{eq:LHS}) and (\ref{eq:RHS}) we learn that
\[
0=\int_{\mathcal{G}\cap\pi^{-1}\left(S_{0}\right)}b\left(x_{1}\left(\rho\right),0,0\right)d\nu\left(\rho\right)\;.
\]
Since $b\left(x_{1}\left(\rho\right),0,0\right)\geq0$ and $b\left(x_{1}\left(\rho_{0}\right),0,0\right)>0$
we have that $\rho_{0}\notin\text{supp}\left(\nu\right)$. Because
$\rho_{0}\in\mathcal{G}\cap\pi^{-1}\left(S_{0}\right)$ is arbitrary,
we see that $\nu\left(\mathcal{G}\cap\pi^{-1}\left(S_{0}\right)\right)=0$.
Similarly $\nu\left(\mathcal{G}\cap\pi^{-1}\left(S_{1}\right)\right)=0$.
\end{proof}

\subsection{An argument under a vanishing assumption on the curved part \label{subsec:An-argument-of-Gerard}}

In~\cite{hassell2010ergodic} the zero mass of non-gliding points
theorem from~\cite{burq-gerard} is applied in a case where it is
known that the boundary semicalssical measure~$\nu$ on~$T^{*}\partial M$
vanishes on the portion lying above the curved part of~$\partial M$,
$\pi^{-1}\left(C\right)$. In this circumstance one does not need
the full power of Theorem~\ref{The Theorem} and we bring here an
ad hoc approach which was explained to us by Gérard.

Since $\nu$ vanishes on $\pi^{-1}\left(C\right)$, it follows from
Gérard-Leichtnam's equation that if ~$\nu$ corresponds to an $M$-semiclassical
measure~$\mu$, then~$\mu$ is supported on the portion of~$T^{*}\mathbb{R}^{2}$
lying above the closed rectangular part of the billiard table. As
a result, one may replace the singular billiard table by an infinite
strip, and then apply the analysis in~\cite{burq-gerard} on a smooth
domain in order to conclude that~$\nu$ vanishes on~$\mathcal{G}\cap\pi^{-1}(S)$.

\section{Appendix: Non-Gliding points in the case of a two dimensional billiard
with $C^{2}$-boundary}

We recall the definition of a special subset $\mathcal{G}_{\text{ng}}\subset\mathcal{G}\subset T^{*}\partial M$
which is known as the set of non-gliding points. For a two dimensional
$C^{2}$-billiard $M$, one can first positively orient $\partial M$,
and then define
\begin{defn}
Non-gliding points for $M\subset\mathbb{R}^{2}$ with billiard dynamics.
\[
\mathcal{G_{\text{ng}}}=\left\{ \left(x,\xi\right)\in\mathcal{G}\:|\text{ The curvature of \ensuremath{\partial M} at \ensuremath{x} is nonpositive}\right\} 
\]
\end{defn}
\begin{rem}
In \cite{burq-gerard} these points are called non-strictly-gliding,
a term which we avoid due to grammatical ambiguity. 
\end{rem}
More generally \cite{Melrose-1975}, the notion of non-gliding points
is defined in the context of a Hamiltonian dynamical system on a manifold
with boundary~$M$, and in any case it requires that the normal vector
to $\partial M$ be $C^{1}$, or, equivalently, that the boundary
be~$C^{2}$. The set of non-gliding points is the union of the set
of diffractive points and the set of high order glancing points as
defined in \cite{Melrose-1975}. In the case of two dimensional billiards
the definition simplifies to the one above.

\bibliographystyle{abbrv}
\bibliography{refs}

\textsc{Einstein Institute of Mathematics, Edmond J. Safra Campus,
The Hebrew University of Jerusalem, Jerusalem 9190401, Israel}

\emph{Email address: }\texttt{\textbf{dan.mangoubi@mail.huji.ac.il}}

\vspace{3ex}

\textsc{Einstein Institute of Mathematics, Edmond J. Safra Campus,
The Hebrew University of Jerusalem, Jerusalem 9190401, Israel}

\emph{Email address: }\texttt{\textbf{adi.weller@mail.huji.ac.il}}
\end{document}